\documentclass{article}

\usepackage{arxiv}

\usepackage[utf8]{inputenc} 
\usepackage[T1]{fontenc}    
\usepackage{hyperref}       
\usepackage{url}            
\usepackage{booktabs}       
\usepackage{amsfonts}       
\usepackage{nicefrac}       

\usepackage{microtype}      
\usepackage{lipsum}		
\usepackage{graphicx}
\usepackage{natbib}
\usepackage{doi}

\usepackage{amsthm}
\usepackage{amsmath}
\usepackage{float}
\usepackage{subcaption}
\newtheorem{theorem}{Theorem}
\newtheorem{lemma}[theorem]{Lemma}

\title{A note on the random constrained-degree percolation model on $\mathbb{L}^2$}

\author{ \href{https://orcid.org/0000-0003-1209-9944}{\includegraphics[scale=0.06]{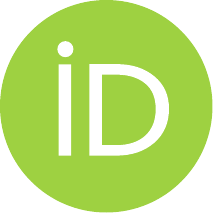}\hspace{1mm} Marco A. Ticse A.}\\
	Departamento de Matemática\\
	Universidade Federal de Minas Gerais\\
	\texttt{markito2016@ufmg.br} \\
}

\renewcommand{\undertitle}{}
\renewcommand{\shorttitle}{A note on the random constrained-degree percolation model on $\mathbb{L}^2$}

\begin{document}
\maketitle

\begin{abstract}
This note considers a constrained-degree percolation model in a random environment on the square lattice, where each vertex is assigned a random degree constraint, and edges attempt to open at random times. We prove that whenever an infinite open cluster exists, it is almost surely unique, implying the positivity of the connectivity function in the supercritical regime.
\end{abstract}

\keywords{  Constrained percolation\and Square lattice \and Uniqueness \and Connectivity function}

\section{Introduction}

Percolation models on regular lattices have been widely used to describe propagation, connectivity, and diffusion phenomena in random media (\cite{MR3389781,MR4200847}). Among them, the classical Bernoulli percolation model stands out, in which edges are declared open independently with probability \(p\). However, in several situations of interest, the independence assumption between the states of the connections is not satisfied, which has motivated the development and study of dependent percolation models (\cite{Gaunt_1979,MR2676027,MR4108128,MR4321220}).

In this note, we study the Constrained-Degree Percolation model in Random Environment (CDPRE), introduced in \cite{MR4492964}. The model assigns an independent random degree constraint to each vertex, while edges attempt to open at random times provided that the degree constraints at their endpoints are respected. It was shown in \cite{MR4492964} that the CDPRE model exhibits a non-trivial supercritical phase for sufficiently permissive degree distributions and that correlations between local events decay exponentially with distance.

One of the central problems in percolation theory is establishing the uniqueness of the infinite cluster in the supercritical phase, which makes it possible to establish various properties, such as the positivity of the connectivity function and the differentiability of the percolation probability., among others (\cite{MR1707339}). For the deterministic constrained-degree percolation model, where every vertex has degree constraint three, this property was first established on the square lattice in \cite{MR4127336} and later extended to higher dimensions in \cite{arcanjo2026constrained}. The latter work also established positivity of the connectivity function and differentiability of the percolation probability.

Along those lines, we extend the uniqueness result to the random-environment setting on the square lattice. We prove that, whenever an infinite open cluster exists, it is almost surely unique. As a consequence, we establish the positivity of the connectivity function in the supercritical regime.

\section{Model}

We introduce the CDPRE model on $\mathbb{L}^2=(\mathbb{Z}^2,\mathcal{E}^2)$. Consider a collection of random variables $\kappa=\{\kappa_v\}_{v\in\mathbb{Z}^2}$, where each $\kappa_v$ represents the degree constraint associated with the vertex $v$. The variables $\kappa_v$ are assumed to be \textit{i.i.d.} and take values $j\in\{0,1,2,3\}$ according to product measure $\mathbb{P}_{\rho}(\kappa_v=j)=\rho_j$ on $\{0,1,2,3\}^{\mathbb{Z}^2}$.

We also consider a collection $U=\{U_e\}_{e\in\mathcal{E}^2}$, where the variables $U_e$ are \textit{i.i.d.} and uniformly distributed on $[0,1]$, independently of $\kappa$. Let $\mathbb{P}$ denote the corresponding product measure on $[0,1]^{\mathcal{E}^2}$. The percolation configuration at time $t\in[0,1]$ is given by
\begin{align*}
\omega_t \colon \{0,1,2,3\}^{\mathbb{Z}^2} \times [0, 1]^{\mathcal{E}^2} &\longrightarrow \ \{0, 1\}^{\mathcal{E}^2} \\
(\kappa, U) \ \ \ \  \quad  &\longmapsto \omega_t(\kappa, U)
\end{align*}
where $\omega_t(\kappa,U)$ denotes the edge configuration at time $t$. The dynamics of the process are defined as follows:
\begin{itemize}
    \item At time $t=0$, all edges are closed;
    \item Each edge $e=\langle u,v\rangle$ attempts to open at time $U_e$. The attempt is successful if, at that time, both vertices $u$ and $v$ have degrees strictly smaller than their corresponding degree constraints. That is, $\deg(u,U_e)<\kappa_u$ and $\deg(v,U_e)<\kappa_v$, where $\deg(v,t)$ denotes the number of open edges incident to $v$ at time $t$;
    \item Once an edge becomes open, it remains open forever.
\end{itemize}
Furthermore, we denote by $\omega_{t,e}$ the state of edge $e$ at time $t$, where $\omega_{t,e}=1$ if $e$ is open and $\omega_{t,e}=0$ otherwise. 
\begin{figure}
\centering
\begin{subfigure}[b]{0.2865\linewidth}
\includegraphics[width=\linewidth]{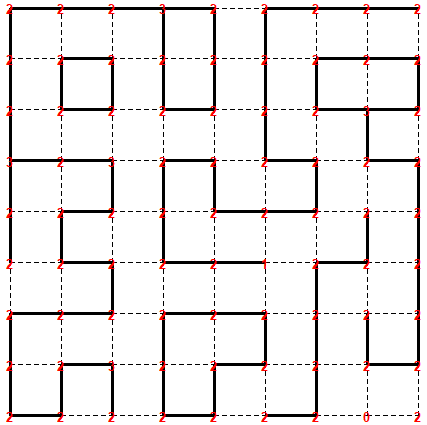}
\caption{ $\rho = (0.025, 0.025, 0.9, 0.05, 0.0)$}
\label{fig:westminster_lateral}
\end{subfigure}
\hspace{2cm}
\begin{subfigure}[b]{0.28\linewidth}
\includegraphics[width=\linewidth]{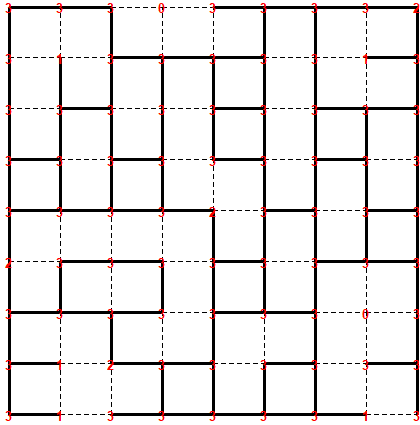}
\caption{$\rho = (0.025, 0.025, 0.05, 0.9, 0.0)$}
\label{fig:westminster_aerea}
\end{subfigure}
\caption{A sample configuration of the CDPRE model with parameter $\rho$ at time $t=1$.}
\label{figuraro}
\end{figure}

Finally, the percolation process described above is governed by the probability measure
\[
\mathbb{P}_{\rho,t}(A)
=
(\mathbb{P}_{\rho}\times\mathbb{P})
\bigl(
\{(\kappa,U):\omega_t(\kappa,U)\in A\}
\bigr),
\]
for every event $A\subset\{0,1\}^{\mathcal{E}^2}$. We denote by $\theta(\rho,t)$ the probability that the origin belongs to an infinite open cluster at time $t$, and by $t_c(\rho)$ the critical point of the model, namely,
\[
\theta(\rho,t)
=
\mathbb{P}_{\rho,t}(0\longleftrightarrow\infty)
\qquad \text{and} \qquad
t_c(\rho)
=
\sup\{t\in[0,1]:\theta(\rho,t)=0\}.
\]
The CDPRE model differs from classical percolation in that, although it is translation invariant, it exhibits long-range dependencies among edge states and does not satisfy the FKG inequality (see \cite{YANG2000203,Glagov2024}).

\section{Main Results}

\begin{theorem}[Uniqueness]\label{unique}
Consider the CDPRE model on $\mathbb{L}^2$ with $\rho=(\rho_0,\rho_1,\rho_2,\rho_3)$ and $\rho_3>0$. If $\theta(\rho,t)>0$, then  the infinite open cluster is almost surely unique.
\end{theorem}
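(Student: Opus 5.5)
Our plan is the Burton--Keane strategy, modified for the two features that break the classical argument here: the lack of finite energy in the usual sense, and the lack of FKG.

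\textbf{Ergodicity.} The configuration $\omega_t$ is a measurable, translation-covariant function of the i.i.d.\ field $(\kappa,U)$. Hence $\mathbb{P}_{\rho,t}$ is translation invariant and ergodic. In particular, the number $N$ of infinite open clusters is almost surely equal to a constant $k\in\{0,1,2,\dots\}\cup\{\infty\}$. One subtlety is that $\omega_t$ is not a local function of $(\kappa,U)$. We would handle this by approximating $\omega_t$ restricted to a box by its computation from $(\kappa,U)$ on larger boxes. The exponential decay of correlations from \cite{MR4492964} shows that this approximation fails only with vanishing probability. This is enough for mixing, and mixing gives ergodicity.

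\textbf{Local modification.} Suppose $2\le k<\infty$, or $k=\infty$. We need a substitute for insertion and deletion tolerance. We would resample the environment inside a box $B_n$: set $\kappa_v=3$ at chosen vertices (this is where $\rho_3>0$ enters) and $\kappa_v=0$ at others, and set $U_e$ very small on selected edges and arbitrary elsewhere.

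The key point is a \emph{locality of influence} property. Changing $(\kappa,U)$ inside $B_n$ alters the history outside $B_n$ only through chains of edges with monotone decreasing opening times that emanate from $B_n$. We would show the following for a suitable choice.
\begin{itemize}
\item Choose $\kappa_v=0$ on a ring of vertices surrounding the modified region. This decouples the inside from the outside completely.
\item The ring would also cut the infinite clusters. To avoid that, choose instead, at the boundary vertices where an infinite cluster enters, $\kappa=3$ together with $U$ very small on the connecting edges inside.
\end{itemize}
Then the edges from the exterior that were already open remain open, and they get joined inside $B_n$.

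For $2\le k<\infty$: with positive probability, all $k$ clusters meet $B_n$. Apply the modification so that they merge inside $B_n$ while the exterior configuration is unchanged. The modification changes $(\kappa,U)$ on finitely many sites, so it preserves absolute continuity, and it yields $N=1$ with positive probability. This contradicts $k\ge 2$.

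For $k=\infty$: use the same kind of modification to create a trifurcation. This is a vertex $v$ with $\kappa_v=3$ and exactly three open incident edges, each leading to a distinct infinite cluster, such that removing $v$ splits its cluster into three infinite pieces. Translation invariance then gives a positive density of trifurcations, and the standard counting argument on boxes gives a contradiction. Degree $3$ is exactly what a trifurcation needs, which explains the hypothesis $\rho_3>0$.

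\textbf{Main obstacle.} The main obstacle is the locality step. In this model a change of a single $U_e$ can cascade: opening an edge earlier may saturate a vertex, which blocks another edge, which frees another vertex, and so on. Without control, the infinite clusters outside $B_n$ could be destroyed. We would first try to control this by building a shielding annulus. Its vertices would have $\kappa=3$ and small $U$ values on the edges that carry the crossing clusters, so these edges open before anything else nearby. Vertices with $\kappa=0$ would block all other exits. Outside the annulus the vertex degrees seen by exterior edges would be unchanged, so the exterior history would be identical. Justifying that such a shield can be placed with positive conditional probability, given the exterior, is the delicate part. There may also be a need to condition on a good event for the $U$-ordering near the boundary.
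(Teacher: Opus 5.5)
Your outline (ergodicity, a local modification inside $B_n$, merging when there are finitely many clusters, a Burton--Keane count when there are infinitely many) matches the paper's. The gap is in the step that carries all the content: modifying $(\kappa,U)$ inside $B_n$ so that the infinite clusters are joined while the exterior history is untouched. You leave it open, and the versions you sketch do not work. First, the hypothesis is only $\rho_3>0$, and $\rho_0$ may vanish (for instance $\rho_3=1$), so any shield built from $\kappa_v=0$ vertices has probability zero in general. Second, even if $\rho_0>0$, a ring of $\kappa=0$ vertices does not decouple inside from outside. It forces closed every crossing edge at those vertices, and if such an edge was open originally, its exterior endpoint acquires a free slot. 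Later attempts outside $B_n$ may then succeed where they failed before, which is exactly the cascade you worry about. The exterior evolution is reproduced only if \emph{every} edge of $\mathcal{O}_{n,n+1}$ keeps its original state, not just those carrying infinite clusters. Third, there is no conditional-probability issue ``given the exterior''. Interior and exterior variables are independent, so it suffices to exhibit an interior event of positive probability that works deterministically for every exterior configuration in a fixed event of positive probability.

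This is what the paper's Lemma~\ref{lema.unicidade} does. It partitions according to the crossing configuration $A\subset\mathcal{O}_{n,n+1}$; there are finitely many choices, so one has positive probability jointly with ``at least three infinite clusters meet $B_n$'' and the corner edges $I_n$ being open. It restricts the crossing times to $(\delta,1-\delta]$ and resamples only inside $B_n$: $\kappa=3$ on $\partial B_n\cup\partial B_{n-1}$, $U_e\le\delta$ on the edges of $\partial B_n$ and on the inward edge at each boundary vertex whose crossing edge is closed, and $U_e\ge 1-\delta$ elsewhere. By time $\delta$, each non-corner boundary vertex with a closed crossing edge is saturated, and each one with an open crossing edge has degree $2$. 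Hence crossing edges are accepted from inside as before, the exterior is unchanged, and the open cycle $\partial B_n$ joins every infinite cluster reaching $B_n$; this is also the paper's argument for two clusters.

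The result is a cluster with at least three ends, not a trifurcation in your sense. With only $\kappa=3$ at your disposal, a boundary vertex whose crossing edge was blocked from the inside must now be saturated by its interior edges, two of which lie on $\partial B_n$. So open arcs of $\partial B_n$ linking different entry points cannot be avoided in general, and no vertex of such a cycle separates three infinite pieces. The paper therefore handles three or more clusters via Lemma~7.7 of \cite{MR3616205}: it obtains a translation-invariant forest $\mathfrak{F}\subset\omega_t$ with a component having at least three ends, and runs the Burton--Keane count on the trifurcations of $\mathfrak{F}$. Your $k=\infty$ step needs this ingredient, or an explicit isolated-trifurcation construction that preserves all crossing edges, which you do not supply.

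Finally, ergodicity needs neither locality nor decay of correlations. $\omega_t$ is a translation-equivariant factor of the i.i.d.\ field $(\kappa,U)$, so translation-invariant events have probability $0$ or $1$.
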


The proof of Theorem~\ref{unique} is inspired by the uniqueness proof for the case in which all degree constraints are equal to three (see \cite{MR4127336}). Unlike the deterministic setting, random degree constraints introduce additional difficulties. To address them, we establish Lemma~\ref{lema.unicidade}, which controls the occurrence of certain local configurations. Especially, the proof is based on assuming the existence of multiple infinite clusters and constructing a local modification inside a sufficiently large box that creates a trifurcation with positive probability. Translation invariance implies that trifurcations occur with positive density, while the \cite{MR990777} argument bounds their number by the size of the boundary of the box. This contradiction yields the uniqueness of the infinite cluster.

A natural consequence of the uniqueness of the infinite cluster is the study of the behavior of the \emph{connectivity function}, defined as the probability that two vertices belong to the same open cluster in a percolation process. In the context of the CDPRE model, the connectivity function between two vertices $u$ and $v$ is given by
\begin{equation*}
    \tau_{\rho,t}(u,v)
    =
    \mathbb{P}_{\rho,t}(u \longleftrightarrow v).
\end{equation*}
Our second result establishes that the connectivity function remains uniformly positive in the supercritical phase.

\begin{theorem}[Connectivity Function]\label{funconec}
Consider the CDPRE model on $\mathbb{L}^2$ with $\rho=(\rho_0,\rho_1,\rho_2,\rho_3)$ and suppose that $\theta(\rho,t)>0$. Then there exists a constant $c(\rho,t)>0$ such that
\begin{equation*}
    \tau_{\rho,t}(u,v)\geq c(\rho,t),
\end{equation*}
for all $u,v\in\mathbb{Z}^2$.
\end{theorem}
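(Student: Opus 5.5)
Theorem~\ref{funconec} will follow from Theorem~\ref{unique} together with a positive-correlation-free argument. The point is that the CDPRE model does not satisfy FKG, so the usual bound $\tau(u,v)\ge\theta^2$ cannot be invoked directly.

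We first note that $\theta(\rho,t)>0$ forces $\rho_3>0$. If $\rho_3=0$, every vertex has degree at most two, so every open cluster is a path or a cycle. A translation-invariant percolation on $\mathbb{Z}^2$ with all degrees at most two can still contain bi-infinite paths, but we do not want to rely on the uniqueness theorem there. Instead we argue that with $\rho_3=0$ and only maximal degree two, the model is dominated so as to have no infinite path. If this is delicate, we simply restrict attention to the case $\rho_3>0$, where Theorem~\ref{unique} applies. In the case $\rho_3=0$ we use a separate Burton--Keane-type observation: infinitely many disjoint bi-infinite paths is compatible with translation invariance. So this case genuinely requires the hypothesis, and the honest plan is to state that $\theta>0$ implies existence together with uniqueness, via Theorem~\ref{unique}, when $\rho_3>0$.

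With uniqueness in hand, let $I$ be the unique infinite cluster. Then $\{u\leftrightarrow\infty\}\cap\{v\leftrightarrow\infty\}\subset\{u\leftrightarrow v\}$, so
\[
\tau_{\rho,t}(u,v)\ge \mathbb{P}_{\rho,t}(u\leftrightarrow\infty,\ v\leftrightarrow\infty).
\]
The task is to lower bound this joint probability uniformly in $u,v$ without FKG. We plan to use the exponential decay of correlations from \cite{MR4492964}: the model is a finite-range-in-time process, so the state of edges near $u$ is determined with high probability by $(\kappa,U)$ in a box of radius $r$. Fix $N$ large so that $\mathbb{P}(0\leftrightarrow\partial B_N)$ is at least $\theta$ and close to $\theta$. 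By uniqueness, $\mathbb{P}(\text{all clusters crossing } B_N \text{ to } \partial B_M \text{ are connected}) \to 1$ as $M\to\infty$ for fixed $N$, which is the standard consequence of uniqueness.

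For $|u-v|$ large, we use the decay of correlations to get
\[
\mathbb{P}(u\leftrightarrow \partial B_N(u),\ v\leftrightarrow\partial B_N(v))\ge \theta^2/2 .
\]
Combined with the event that both local clusters join the infinite cluster, this gives $\mathbb{P}(u\leftrightarrow\infty,\ v\leftrightarrow\infty)\ge \theta^2/2-o(1)$. More directly, we write $\mathbb{P}(u\leftrightarrow\infty, v\leftrightarrow\infty)\ge \mathbb{P}(A_u\cap A_v)-\varepsilon$, where $A_u$ is the local approximation of $\{u\leftrightarrow\infty\}$ in a box around $u$, and apply decorrelation to $A_u,A_v$. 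For $|u-v|\le R$ with $R$ fixed, there are finitely many pairs up to translation. For each such pair we use a local modification argument, forcing a short open path from $u$ to $v$ in the way the paper's Lemma controls local configurations, to get $\tau(u,v)>0$. The constant $c$ is then the minimum of $\theta^2/4$ and these finitely many positive numbers.

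The main obstacle is the decorrelation step, since the dependence in the model is long range. We would first try the exponential decay of local-event correlations from \cite{MR4492964}, applied to finite-box approximations of $\{u\leftrightarrow\infty\}$.
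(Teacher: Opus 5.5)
Your argument is essentially the paper's. Uniqueness gives $\tau_{\rho,t}(u,v)\ge\mathbb{P}_{\rho,t}(u\leftrightarrow\infty,\,v\leftrightarrow\infty)$. Replacing $\{u\leftrightarrow\infty\}$ by $\{u\leftrightarrow\partial B_N(u)\}$ costs at most $2\bigl(\mathbb{P}_{\rho,t}(0\leftrightarrow\partial B_N)-\theta(\rho,t)\bigr)$, which is why choosing $N$ with $\mathbb{P}_{\rho,t}(0\leftrightarrow\partial B_N)$ close to $\theta(\rho,t)$ is the right move. The decay of correlations of \cite{MR4492964} then handles distant pairs, and the finitely many close pairs are treated separately before taking a minimum.

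The one thing to repair is your paragraph on $\rho_3$. Its opening claim, that $\theta(\rho,t)>0$ forces $\rho_3>0$, is never proved, and the rest of the paragraph effectively concedes this. So as written your proof covers only $\rho_3>0$. The paper's proof has exactly the same dependence, since it invokes Theorem~\ref{unique}, whose hypothesis includes $\rho_3>0$. The honest fix is to add $\rho_3>0$ as an explicit hypothesis and drop the unsupported claim.

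For the close pairs you do not need anything modelled on the paper's lemma. Fix a self-avoiding path from $u$ to $v$ and take the cylinder event where:
\begin{itemize}
\item the vertices of the path have $\kappa\ge 2$, which is possible because $\theta(\rho,t)>0$ forces $\rho_2+\rho_3>0$;
\item the path edges have $U_e<\varepsilon<t$;
\item the other edges at those vertices have $U_e>\varepsilon$.
\end{itemize}
This event has positive probability and forces the path open at time $t$.
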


The proof combines the Theorem \ref{unique} with the exponential decay of correlations (see \cite{MR4492964}). Positivity for nearby vertices follows from a local argument. For distant vertices, we approximate the event of belonging to the infinite cluster by connection to the boundary of a sufficiently large box and exploit the decay of correlations to obtain an approximate independence estimate. This yields a uniform positive lower bound for the connectivity function.

\section{Preliminary results}

The following lemma, fundamental to our analysis, relies on the notion of the \emph{number of ends} of a graph $\mathcal{G}$, which informally counts its distinct infinite directions. We first define the event
\[
\mathcal{D}_m
=
\{\text{there exist exactly $m$ infinite open clusters}\}.
\]

\begin{lemma}\label{lema.unicidade}
If $\mathbb{P}_{\rho,t}(\mathcal{D}_m)=1$ for some
$m=3,4,\dots,\infty$, then, for all sufficiently large $n$, there exists, with positive probability, an open cluster in the box $B_n$ having at least three ends.
\end{lemma}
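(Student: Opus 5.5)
The plan is to turn the assumption $\mathbb{P}_{\rho,t}(\mathcal{D}_m)=1$, $m\ge 3$, into a positive‑probability event on which three disjoint infinite open clusters all reach a fixed box. Then a local modification of $(\kappa,U)$ inside that box should merge them at a vertex with constraint $3$, producing a cluster that meets $B_n$ and splits into at least three infinite pieces once the edges of $B_n$ are removed. That is what we mean by ``at least three ends''.

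\textbf{Step 1 (three clusters meet the box).} Let $A_n$ be the event that at least three distinct infinite open clusters intersect $B_n$. Since a.s. there are at least three infinite clusters, $A_n\uparrow$ an almost sure event. Hence $\mathbb{P}_{\rho,t}(A_n)\ge 1/2$ for all $n\ge n_0$.

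To make later surgery possible, we refine this to a box pair $B_n\subset B_N$. We require that three distinct infinite clusters of $\omega_t$ restricted to $\mathcal{E}^2\setminus\mathcal{E}(B_n)$ touch $\partial B_n$ at vertices $x_1,x_2,x_3$. Fixing a deterministic choice rule, we also choose for each $i$ the specific outside edge through which $x_i$ escapes. Since there are only finitely many possible choices, some triple occurs with probability at least $c_n>0$.

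\textbf{Step 2 (local modification).} Inside $B_n$ we fix three vertex‑disjoint lattice paths from $x_1,x_2,x_3$ to a central vertex $z$, together with all the edges of $B_n$ not on these paths. We then replace $(\kappa,U)$ inside $B_n$ as follows.

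1. Set $\kappa_z=3$, which has probability $\rho_3>0$.
2. Set $\kappa=3$ at every interior vertex of the paths. This costs at most $\rho_3^{|B_n|}$ in probability.
3. Set $\kappa=0$ at every other vertex of $B_n\setminus\partial B_n$, which has probability $\rho_0$ per vertex. If $\rho_0=0$, we instead set the $U$ of the corresponding edges to exceed $t$.
4. Set $U_e\in(0,\varepsilon)$ for the path edges, with $\varepsilon$ smaller than every $U$ value relevant near $\partial B_n$, which is possible with conditional positive probability.

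On this event the tree of paths opens before anything else nearby, so $z$ has degree $3$ and is joined to $x_1,x_2,x_3$. Because the new local event depends only on variables indexed inside $B_n$, and those are independent of the variables outside, the modified event has probability at least $c_n\,\delta_n>0$. The factor $\delta_n>0$ depends only on $n$.

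\textbf{Step 3 (the main obstacle: stability outside the box).} The difficulty, absent in Bernoulli percolation, is that the modification can propagate. Opening a path edge at $x_i$ early, or killing inner edges, changes the degree of the boundary vertices, and that can change which outside edges succeed. Such changes cascade outward through time‑ordered chains of attempts and could destroy the three infinite outside clusters.

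My approach is to control this by the boundary vertices' capacity.

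1. I would put $x_1,x_2,x_3$ themselves in the modified region, setting $\kappa_{x_i}=3$. I would also arrange, using the choice of $x_i$ in Step 1 on the event that $x_i$ has at most one outside open edge, that the extra inner edge does not block any outside edge that was open before.
2. At the other boundary vertices, the inner edges are either unchanged or removed, so outside edges there can only gain capacity.
3. I would then argue that the outside configuration can change only along a causal chain of edges with decreasing $U$ starting at $\partial B_n$. A standard domination argument bounds the probability that such a chain reaches distance $N-n$ by something exponentially small in $N-n$; this is the same mechanism behind the decay of correlations in \cite{MR4492964}.
4. Choosing $N$ large compared with $n$ makes this loss smaller than half of $c_n\delta_n$. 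Outside $B_N$ the three infinite clusters then remain intact, and they remain connected to $x_1,x_2,x_3$.

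This yields, with positive probability, a single open cluster through $z$ whose complement of $B_n$ has at least three infinite components, that is, a cluster with at least three ends. I expect essentially all the work to lie in making Step 3 rigorous, specifically in ensuring that the added inner edges at the $x_i$ never block the chosen outside escape edges.
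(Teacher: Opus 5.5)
\textbf{There is a genuine gap in Step 3, at exactly the point you flagged.}

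Grant the causal-chain estimate. (A perturbation at $\partial B_n$ propagates from earlier to later attempts, so the relevant chains have \emph{increasing} clocks as they move away from the box; the factorial bound is the same.) It still only guarantees that the modified configuration agrees with the original one \emph{outside} $B_N$. It says nothing about the annulus $B_N\setminus B_n$, which is precisely where the connections from $x_1,x_2,x_3$ to the three infinite clusters run.

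Because the loss must be beaten by $c_n\delta_n$, which is tiny, you are forced to take $N$ much larger than $n$. Nothing then prevents the perturbation from sweeping through the whole annulus. Since the model is not monotone, both kinds of change you introduce are dangerous. The early path edges at the $x_i$ are one. The capacity freed at the other boundary vertices is the other: a previously blocked outward edge may now open, saturate its outer endpoint, and block an edge of one of the three clusters. Hence the conclusion that the clusters ``remain connected to $x_1,x_2,x_3$'' does not follow. Shrinking $N$ does not help, since then the propagation probability is no longer small compared with $c_n\delta_n$.

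\textbf{The missing idea, which is what the paper does, is to make the outside evolution \emph{exactly} invariant under the modification, not approximately.}

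First condition additionally on the set $A\subset\mathcal{O}_{n,n+1}$ of outward edges that are open. There are finitely many choices, so one of them has positive probability jointly with the three-cluster event. Also condition on the outward clocks lying in $(\delta,1-\delta]$ with $\delta<t$.

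Then resample only the variables of $B_n$:
\begin{itemize}
\item $\kappa=3$ on $\partial B_n\cup\partial B_{n-1}$;
\item clocks at most $\delta$ on every edge of the cycle $\partial B_n$, and on the inward edge at each boundary vertex whose outward edge is closed;
\item clocks at least $1-\delta$ on all other edges of $B_n$.
\end{itemize}

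Before any outward edge attempts, each non-corner boundary vertex then has residual capacity exactly $1$ if its outward edge was open and $0$ if it was closed. By induction over the attempt times, every outward edge opens or fails exactly as before, at the same time, so the entire configuration outside $B_n$ is unchanged. The four corners, which have two outward edges, need some extra care.

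Positivity then follows directly from the product structure of the inside and outside variables, and no annulus or decorrelation estimate is needed. The fully open cycle $\partial B_n$ joins the at least three disjoint infinite outside pieces into one cluster with at least three ends.

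Your Step 3.1 already performs this capacity matching at $x_1,x_2,x_3$. What is missing is doing it at \emph{every} vertex of $\partial B_n$. With only $\rho_3>0$ at your disposal, saturating the vertices whose outward edge is closed requires opening inside edges at them. The open boundary cycle plus the inward edges is the natural way to do that.
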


\begin{proof} Consider the event 
\begin{equation}\label{evento}
\mathcal{A}_n^t= \{\text{at least three infinite open clusters in }\omega_t\text{ intersect } B_n\}. 
\end{equation} 
For $\ell\in\{n-1,n,n+1\}$ and $e=\langle x,y\rangle\in\mathcal{E}^2$, define 
\begin{align*} \mathcal{O}_{n,\ell} &= \{e\in\mathcal{E}^2 : x\in\partial B_n \text{ and } y\in\partial B_\ell\},\\[0.5em] I_n &= \{e\in\mathcal{O}_{n,n} : e\cap\{(n,n),(-n,n),(-n,-n),(n,-n)\}\neq\emptyset\},\\[0.5em] \mathcal{O}_n &= \{e\in\mathcal{O}_{n,n+1} : e\cap\{(n,n),(-n,n),(-n,-n),(n,-n)\}\neq\emptyset\}. 
\end{align*} 
We also consider the event $ \mathcal{B}_n^t= \{\text{all edges in }I_n\text{ are open at time }t\}.$ It is important to note that there exists a positive constant $c=c(\rho,t)$ such that $\inf_{n\geq 1}\mathbb{P}_{\rho,t}(\mathcal{B}_n^t)>c(\rho,t).$  
Moreover, since $\lim_{n\to\infty}\mathbb{P}_{\rho,t}(\mathcal{A}_n^t)=1,$ it follows that, for all sufficiently large $n$, \[ \mathbb{P}_{\rho,t}(\mathcal{A}_n^t\cap\mathcal{B}_n^t)\geq c(\rho,t). \] For each subset $A\subset\mathcal{O}_{n,n+1}$, define 
$\mathcal{C}_{n,A}^t= \left\{ \omega_t(e)=1 \text{ for } e\in A,\; \omega_t(e)=0 \text{ for } e\in \mathcal{O}_{n,n+1}\setminus A \right\}.$
In other words, $\mathcal{C}_{n,A}^t$ represents the configuration in which the edges in $A$ are open at time $t$, while all remaining edges in $\mathcal{O}_{n,n+1}$ are closed. Since the family $\{\mathcal{C}_{n,A}^t\}_{A\subset\mathcal{O}_{n,n+1}}$ forms a partition of the configuration space, there necessarily exists a subset $A\subset\mathcal{O}_{n,n+1}$ such that \begin{equation}\label{PABC} 
\mathbb{P}_{\rho,t} \left( \mathcal{A}_n^t \cap \mathcal{B}_n^t \cap \mathcal{C}_{n,A}^t \right)>0. \end{equation} 
For convenience, we write 
\[ \mathcal{H}_{n,A}^t := \mathcal{A}_n^t \cap \mathcal{B}_n^t \cap \mathcal{C}_{n,A}^t. \]
Let $\delta\in(0,1)$, and let $\mathcal{U}_{n,\delta}$ denote the set of configurations $U$ such that $ U_e\in(\delta,1-\delta]$ for all $e\in\mathcal{O}_{n,n+1}.$ By \eqref{PABC} and the fact that $\lim_{\delta\to0^+} \mathbb{P}_{\rho,t}(\mathcal{U}_{n,\delta})=1,$ we can choose $\delta<t$ sufficiently small so that 
\begin{equation}\label{PHU} 
\mathbb{P}_{\rho,t} \left( \mathcal{H}_{n,A}^t \cap \mathcal{U}_{n,\delta} \right)>0. 
\end{equation}
Let $u$ be a unit vector with respect to the $\ell_1$-norm. For an edge
$e=\langle x,x+u\rangle\in\mathcal{O}_{n,n+1}\setminus\mathcal{O}_n$,
we denote by $e^o=\langle x,x-u\rangle$ the \emph{edge opposite to $e$}, that is, the edge pointing in the opposite direction. Moreover, if
$M\subset\mathcal{O}_{n,n+1}\setminus\mathcal{O}_n$, we define $M^o=\{e^o:e\in M\}$ as the set of edges opposite to those in $M$.

Finally, we define $A^f=(\mathcal{O}_{n,n+1}\setminus A)^o\cup\mathcal{O}_{n,n}$. The set $A^f$ combines the edges in $\mathcal{O}_{n,n}$ with the edges opposite to those not belonging to $A$. Let
\begin{equation*}
\begin{aligned}
\mathcal{F}_{n,A,\delta}
=
\Big\{
(\kappa,U)\in \{0,1,2,3\}^{B_n} \times [0,1]^{B_n} :\;&
\kappa_v=3
\text{ for all }
v\in \partial B_n\cup\partial B_{n-1},\\
&
U_e\leq\delta
\text{ for all }
e\in A^f, \\
&
U_e\geq 1-\delta
\text{ for all }
e\in B_n\setminus A^f
\Big\}.
\end{aligned}
\end{equation*}
Thus, all degree constraints on the boundaries of $B_n$ and $B_{n-1}$ are fixed at $3$, while the random times $U_e$ satisfy $U_e\leq\delta$ for all $e\in A^f$ and $U_e\geq 1-\delta$ for all $e\in B_n\setminus A^f$. This fixes the edge configuration. 

Given $V\subset\mathbb{Z}^2$ and $F\subset\mathcal{E}^2$, let
$\Pi_{V\times F}$ denote the canonical projection from
$\{0,1,2,3\}^{\mathbb{Z}^2}\times[0,1]^{\mathcal{E}^2}$ onto
$\{0,1,2,3\}^V\times[0,1]^F$. Writing $G_n=B_n^c\times B_n^c$, 
and noting that $\rho_3>0$, equation~\eqref{PHU} implies that
\begin{equation}\label{positividade}
\begin{aligned}
\mathbb{P}_{\rho,t}
\left(
\mathcal{F}_{n,A,\delta}
\times
\Pi_{G_n}
\bigl(
\mathcal{H}_{n,A}^t
\cap
\mathcal{U}_{n,\delta}
\bigr)
\right) =
\mathbb{P}_{\rho,t}
\left(
\mathcal{F}_{n,A,\delta}
\right)
\mathbb{P}_{\rho,t}
\left(
\Pi_{G_n}
\bigl(
\mathcal{H}_{n,A}^t
\cap
\mathcal{U}_{n,\delta}
\bigr)
\right)
>0.
\end{aligned}
\end{equation}
Therefore, if $\omega_t\in \mathcal{F}_{n,A,\delta}\times
\Pi_{G_n}\bigl(\mathcal{H}_{n,A}^t\cap\mathcal{U}_{n,\delta}
\bigr),$ then all edges in $\mathcal{O}_{n,n}$ are open in $\omega_t$, and at least three infinite open clusters intersect $B_n$. This completes the proof. \hfill $\square$
\end{proof}


\section{Proofs}

\subsection{Proof of Theorem \ref{unique}}

Assume that $\mathbb{P}_{\rho,t}(\mathcal{D}_m)=1$ for some
$m=3,4,\dots,\infty$. Then, by Lemma~\eqref{lema.unicidade}, the assumptions of Lemma~7.7 in \cite{MR3616205} are satisfied. Consequently, there exists (on an enlarged probability space) a random forest $\mathfrak{F}\subset\omega_t$ such that:
\begin{itemize}
    \item[(a)] the distribution of the pair $(\omega_t,\mathfrak{F})$ is translation invariant;
    \item[(b)] the collection of forests $\mathfrak{F}$ containing a connected component with at least three ends has positive probability.
\end{itemize}
Let $\mathbb{Q}_t$ denote the distribution of the pair $(\omega_t,\mathfrak{F})$, and let $\mathbb{E}_t[\cdot]$ be the corresponding expectation. Let $X$ be the set of trifurcation points of $\mathfrak{F}$; that is, $x\in X$ if removing all edges incident to $x$ splits the connected component containing $x$ into at least three infinite connected components. By item (b), we have $\mathbb{Q}_t(v\in X)>0$ for some $v\in\mathbb{Z}^2$. Translation invariance then implies that
\[
\mathbb{E}_t\bigl[|X\cap B_n|\bigr]
=
|B_n|\,\mathbb{Q}_t(0\in X).
\]
On the other hand, the Burton--Keane argument yields $|B_n\cap X| \leq|\partial B_n|$ (see \cite{MR990777}). Combining the two previous expressions, we obtain the existence of a constant $c>0$ such that
\[
|\partial B_n|
\geq
\mathbb{E}_t\bigl[|B_n\cap X|\bigr]
=
c|B_n|,
\]
for all sufficiently large $n$, which is a contradiction.

It remains to show that $\mathbb{P}_{\rho,t}(\mathcal{D}_2)=0$. This follows directly from \eqref{positividade} by replacing the definition of $\mathcal{A}_n^t$ in \eqref{evento} with
\[
\mathcal{A}_n^t=
\{\text{exactly two infinite open clusters in }\omega_t\text{ intersect }B_n\}.
\]
Indeed, the occurrence of $\mathcal{F}_{n,A,\delta} \times
\Pi_{G_n}\bigl(\mathcal{H}_{n,A}^t \cap \mathcal{U}_{n,\delta}
\bigr)$ forces all edges in $\partial B_n$ to be open, thereby merging the two infinite clusters. Hence,
\[
\mathbb{P}_{\rho,t} \left(\left(\bigcup_{m=2}^{\infty}\mathcal{D}_m\right)\cup
\mathcal{D}_\infty\right)=0.
\]
Finally, since the measure $\mathbb{P}_{\rho,t}$ is translation invariant and ergodic, and   $\mathcal{D}_m$ is translation invariant, the number of infinite open clusters is almost surely constant. Therefore, the infinite open cluster is almost surely unique. \hfill $\square$

\subsection{Proof of Theorem  \ref{funconec}}

Since the measure $\mathbb{P}_{\rho,t}$ is translation invariant, it follows that, for any $u,v\in\mathbb{Z}^2$,
\begin{equation}\label{uv=z}
\tau_{\rho,t}(u,v)
:=\mathbb{P}_{\rho,t}(u\longleftrightarrow v)
=\mathbb{P}_{\rho,t}(0\longleftrightarrow v-u)
=\tau_{\rho,t}(0,z),   
\end{equation}
where $z=v-u$. Therefore, it suffices to study the function $\tau_{\rho,t}(0,z)$. By Theorem~\ref{unique}, whenever $\theta(\rho,t)>0$, the infinite open cluster is almost surely unique. Hence,
\begin{equation}\label{conce.uniq1}
\mathbb{P}_{\rho,t}(0\longleftrightarrow z)
\geq
\mathbb{P}_{\rho,t}(0\longleftrightarrow\infty,\,
z\longleftrightarrow\infty).
\end{equation}
 Denote by $B_n(x)$ the box of side length $2n$ centered at $x\in\mathbb{Z}^2$. For simplicity, we write $B_n:=B_n(0)$.  Let $C$ denote the open cluster containing the origin. Observe that 
\[
\{0\longleftrightarrow\infty,\,
z\longleftrightarrow\infty\}
\subset
\{0\longleftrightarrow\partial B_n,\,
z\longleftrightarrow\partial B_n(z)\},
\]
and that the difference between these two events is contained in
\begin{equation}\label{diferença}
    \{0\longleftrightarrow\partial B_n,\,
z\longleftrightarrow\partial B_n(z),\,
|C|<\infty\}
\cup
\{0\longleftrightarrow\partial B_n,\,
z\longleftrightarrow\partial B_n(z),\,
|C_z|<\infty\}.    
\end{equation}
Using \eqref{diferença} together with translation invariance, we obtain
\begin{equation*}
\begin{aligned}
\mathbb{P}_{\rho,t}( 0 \longleftrightarrow \partial B_n , z \longleftrightarrow \partial &B_n(z))
-\mathbb{P}_{\rho,t}(0 \longleftrightarrow \infty, z \longleftrightarrow \infty) \\
&\leq \mathbb{P}_{\rho,t}(0 \longleftrightarrow \partial B_n, z \longleftrightarrow \partial B_n(z), |C|<\infty) + \mathbb{P}_{\rho,t}(0 \longleftrightarrow \partial B_n, z \longleftrightarrow \partial B_n(z), |C_z|<\infty) \\
&= 2 \  \mathbb{P}_{\rho,t}(0 \longleftrightarrow \partial B_n, z \longleftrightarrow \partial B_n(z), |C|<\infty) \\
&\leq 2 \  \mathbb{P}_{\rho,t}(0 \longleftrightarrow \partial B_n, |C|<\infty).
\end{aligned}
\end{equation*}
Since the events $\{0\longleftrightarrow\partial B_n,\,
|C|<\infty\}$ decrease to the empty set as $n\to\infty$, it follows that
\[
\mathbb{P}_{\rho,t}\bigl(0\longleftrightarrow\infty,\,
z\longleftrightarrow\infty\bigr)
\geq\mathbb{P}_{\rho,t}\bigl(
0\longleftrightarrow\partial B_n,\,z\longleftrightarrow\partial B_n(z)\bigr)-
2\,\mathbb{P}_{\rho,t}\bigl(0\longleftrightarrow\partial B_n,\,
|C|<\infty\bigr).
\]
Hence, there exists $R$ sufficiently large, independent of $z$, such that
\[
\mathbb{P}_{\rho,t}\bigl(0\longleftrightarrow\partial B_R,\,
|C|<\infty\bigr)<\frac{\theta(\rho,t)^2}{4}.
\]
Therefore,
\begin{equation}\label{conce.uniq2}
\mathbb{P}_{\rho,t}
\bigl(
0\longleftrightarrow\infty,\,
z\longleftrightarrow\infty
\bigr)
>
\mathbb{P}_{\rho,t}
\bigl(
0\longleftrightarrow\partial B_R,\,
z\longleftrightarrow\partial B_R(z)
\bigr)
-
\frac{\theta(\rho,t)^2}{2}.
\end{equation}
By \eqref{conce.uniq1} and \eqref{conce.uniq2}, we obtain
\[
\tau_{\rho,t}(0,z)
>
\mathbb{P}_{\rho,t}
\bigl(
0\longleftrightarrow\partial B_R,\,
z\longleftrightarrow\partial B_R(z)
\bigr)
-
\frac{\theta(\rho,t)^2}{2}.
\]

For notational convenience, let $\mathcal{A}_{x,n} =\{x\longleftrightarrow\partial B_n(x)\}.$
By Theorem 2 (in \cite{MR4492964}), there exist constants $\psi_1(d)>0$ and $\psi_2(d)>0$, depending only on the dimension $d$, such that
\[
\left|
\mathbb{P}_{\rho,t}
\bigl(
\mathcal{A}_{0,R}\cap\mathcal{A}_{z,R}
\bigr)
-
\mathbb{P}_{\rho,t}(\mathcal{A}_{0,R})
\mathbb{P}_{\rho,t}(\mathcal{A}_{z,R})
\right|
\leq
\psi_1(d)\,R\,
e^{-\psi_2(d)\delta(B_R,B_R(z))}.
\]
Using translation invariance, for $d=2$ we deduce that
\begin{align*}
\mathbb{P}_{\rho,t}
\bigl(\mathcal{A}_{0,R}\cap\mathcal{A}_{z,R}\bigr)
&\geq
\big(\mathbb{P}_{\rho,t}(\mathcal{A}_{0,R})\big)^2-
\psi_1(2)\,R\,e^{-\psi_2(2)\delta(B_R,B_R(z))}
\geq
\theta(\rho,t)^2 - \psi_1(2)\,R\,e^{-\psi_2(2)\delta(B_R,B_R(z))}.
\end{align*}
Consequently, if $0$ and $z$ are sufficiently far apart so that
\[
e^{-\psi_2(2)\delta(B_R,B_R(z))}
<
\frac{\theta(\rho,t)^2}
{4\psi_1(2)R},
\]
then \eqref{conce.uniq2} implies that
\begin{equation}\label{c1}
\tau_{\rho,t}(0,z) > \frac{\theta(\rho,t)^2}{4}.    
\end{equation}

To complete the proof, it remains to consider the case where $z$ is close to the origin. Fix $R>0$ as above and define $\Gamma_{\rho,t}(R) = \left\{ \tau_{\rho,t}(0,z) : z\in B_R \right\}$. Since $B_R$ is finite, the set $\Gamma_{\rho,t}(R)$ is also finite. 
Moreover, for every $z\in B_R$, $\tau_{\rho,t}(0,z)>0$ since any finite path connecting $0$ and $z$ is open with positive probability. Thus, the quantity 
\begin{equation}\label{c2}
c_1(\rho,t,R) := \min_{z\in B_R} \tau_{\rho,t}(0,z)
\end{equation}
is well defined and strictly positive. Hence, by \eqref{c1} and \eqref{c2}, defining
\[
c(\rho,t):=\min\left\{\frac{\theta(\rho,t)^2}{4},\,c_1(\rho,t,R)\right\},
\]
we obtain $\tau_{\rho,t}(0,z)\geq c(\rho,t)$ for all $z\in\mathbb{Z}^2$. The result then follows from translation invariance and \eqref{uv=z}.
\hfill $\square$


\bibliographystyle{cas-model2-names}

\bibliography{references}  






\end{document}